\newtheorem{theorem}{Theorem}[section]
\newtheorem{lemma}[theorem]{Lemma}
\newtheorem{proposition}[theorem]{Proposition}
\newtheorem{corollary}[theorem]{Corollary}
\newtheorem{definition}[theorem]{Definition}
\begin{document}
	
\title[An antidote to Kadison's anti-commutant theorem]{Orthogonality: An antidote to Kadison's anti-lattice theorem}
\author{Anil Kumar Karn}
	
\address{School of Mathematical Sciences, National Institute of Science Education and Research, HBNI, Bhubaneswar, P.O. - Jatni, District - Khurda, Odisha - 752050, India.}

\email{\textcolor[rgb]{0.00,0.00,0.84}{anilkarn@niser.ac.in}}
	
	%\dedicatory{This paper3 is dedicated to Professor ABCD}
	
\subjclass[2010]{Primary 46L05; Secondary 46B40, 46B42.}
	
\keywords{Algebraic orthogonality, infimum, supremum, ortho-infimum, ortho-supremum, absolute $\infty$-orthogonality, absolute order unit space.}
	
\begin{abstract}
	In this paper, we propose  non-commutative analogues of infimum and supremum with the help of algebraic orthogonality.
\end{abstract}

\maketitle 

\section{Introduction} 
Order structure is an essential component of a C$^*$-algebra theory. Using the order structure as well as order theoretic techniques, Gelfand and Naimark proved in \cite{GN} that a norm closed, self-adjoint sub algebra of $B(H)$ can be characterized as a C$^*$-algebra where $H$ is a complex Hilbert space. For the commutative case, they proved that a unital commutative C$^*$-algebra can be identified, up to isometrically *-isomorphism, with $C(X, \mathbb{C})$ for a suitable compact, Hausdorff space $X$. 

In 1941, Kakutani characterized a unital $AM$-space, up to isometric lattice isomorphism, as $C(X, \mathbb{R})$ for a suitable compact, Hausdorff space $X$ \cite{SK}. Thus we note that the self-adjoint part of a (unital) commutative C$^*$-algebra is a vector lattice.

In 1951, Kadison proved that if $H$ is a complex Hilbert space and if $S$ and $T$ are bounded self-adjoint operators on $H$, then $\inf \lbrace S, T \rbrace$ exists in $B(H)_{sa}$ if and only if $S$ and $T$ are comparable \cite{Kad 51}. This is known as Kadison's anti-lattice theorem. In particular, a vector lattice structure can not be expected in a general C$^*$-algebra. This observation contrasts with Kakutani Theorem. 

In this short note, we make an attempt to establish that the `anti-lattice' situation is not completely `anti' lattice. In fact, we show that orthogonality among positive elements can bring a structure in a general C$^*$-algebra which has a close correlation with the vector lattice structure. This paper may be seen as a prequel of \cite{K16, K18}.

\section{A substitute for infimum and supremum} 

 Algebraically orthogonal pairs of positive elements play an important role in the theory of C$^*$-algebras. For example, it follows from the functional calculus that every self-adjoint element $a \in A_{sa}$ has a unique decomposition: $a = a^+ - a^-$ in $A^+$, with $a^+ a^- = 0$. By the functional calculus again, we also get $\vert a \vert = a^+ + a^-$. For $a, b \in A^+$, we say that $a$ is \emph{algebraically orthogonal} to $b$ if $a b = 0$.
 
Recall that the algebraic orthogonality can be defined to a pair of general elements of a C$^*$-algebra. Here, we revisit this notion in the light of its order theoretic characterization. Let us begin with 
\begin{lemma}\label{1}
	Let $A$ be a C$^*$-algebra and let $a b = 0$ for some $a, b \in A^+$. Then $c d = 0$ whenever $0 \le c \le a$ and $0 \le d \le b$.
\end{lemma}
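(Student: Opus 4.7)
The plan is to reduce the problem to the standard C*-algebraic fact that for positive elements $x, y$ in a C*-algebra, $xy = 0$ is equivalent to $x^{1/2} y x^{1/2} = 0$, which in turn is equivalent to $x^{1/2} y^{1/2} = 0$. This equivalence follows from the C*-identity: $\|x^{1/2} y^{1/2}\|^2 = \|y^{1/2} x y^{1/2}\|$, and $y^{1/2} x y^{1/2}$ is a positive element whose vanishing is detected by its norm. Also, once $x^{1/2} y^{1/2} = 0$, multiplying on the left by $x^{1/2}$ and on the right by $y^{1/2}$ recovers $xy = 0$.

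First I would apply this to the hypothesis $ab = 0$ to obtain $b^{1/2} a b^{1/2} = 0$. Since $0 \le c \le a$, conjugation by $b^{1/2}$ is order-preserving, so
\[
0 \le b^{1/2} c b^{1/2} \le b^{1/2} a b^{1/2} = 0,
\]
hence $b^{1/2} c b^{1/2} = 0$. Running the equivalence above in the opposite direction with the pair $(b, c)$, this yields $cb = 0$, and taking adjoints $bc = 0$.

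Next I would repeat the argument with the roles swapped. Having shown $bc = 0$ with $b, c \in A^+$, the same reasoning gives $c^{1/2} b c^{1/2} = 0$. Since $0 \le d \le b$, conjugating by $c^{1/2}$ yields $0 \le c^{1/2} d c^{1/2} \le c^{1/2} b c^{1/2} = 0$, so $c^{1/2} d c^{1/2} = 0$, and hence $dc = 0$, i.e.\ $cd = 0$, as required.

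There is no real obstacle here; the only subtlety is ensuring one uses the correct bridge between $xy = 0$ and the positive expression $x^{1/2} y x^{1/2}$ so that the monotonicity of conjugation can be invoked. A cleaner one-shot alternative would be to write $c = a^{1/2} s a^{1/2}$ with $0 \le s \le 1$ (when $a$ is invertible, and otherwise by approximation), but the two-step compression argument above is more elementary and avoids approximations.
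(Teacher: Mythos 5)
Your argument is correct and is essentially the paper's own proof: both compress the inequality $0 \le c \le a$ by (a power of) $b$, use positivity plus the C$^*$-identity to convert the vanishing compression into $cb=0$, and then iterate with $d$. The only difference is that you conjugate by $b^{1/2}$ where the paper conjugates by $b$ and computes $\Vert c^{1/2} b \Vert^2 = \Vert b c b \Vert$, which is a cosmetic variation.
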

\begin{proof}
	Let $0 \le c \le a$. Then $0 \le b c b \le b a b = 0$ so that $b c b = 0$. It follows that $\Vert c^{\frac{1}{2}} b \Vert^2 = \Vert b c b \Vert = 0$. Thus $c^{\frac{1}{2}} b = 0$ so that $c b = 0$. Now, by the same arguments, we may further conclude that $c d = 0$ whenever $0 \le c \le a$ and $0 \le d \le b$.
\end{proof}
\begin{proposition}\label{2}
	Let $A$ be a C$^*$-algebra. For $a, b \in A_{sa}$, the following statements are equivalent:
	\begin{enumerate}
		\item $\vert a \vert \vert b \vert = 0$;
		\item $a^+, a^-, b^+, b^-$ are mutually algebraically orthogonal;
		\item $\vert a \pm b \vert = \vert a \vert + \vert b \vert$.
	\end{enumerate}
\end{proposition}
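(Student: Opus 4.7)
The plan is to establish the cycle $(1) \Rightarrow (2) \Rightarrow (3) \Rightarrow (1)$, exploiting uniqueness of the Jordan decomposition and Lemma \ref{1} at the key steps.

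For $(1) \Rightarrow (2)$, I would observe that in any C$^*$-algebra the functional calculus gives $a^+, a^- \le \vert a \vert$, $b^+, b^- \le \vert b \vert$, together with the built-in orthogonalities $a^+ a^- = 0$ and $b^+ b^- = 0$. Hypothesis (1) then activates Lemma \ref{1}: from $\vert a\vert \vert b\vert = 0$ I read off $a^\varepsilon b^\eta = 0$ for every pair of signs $\varepsilon, \eta \in \{+, -\}$, which together with the two intrinsic orthogonalities gives the mutual orthogonality in (2).

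For $(2) \Rightarrow (3)$, the idea is to exhibit the Jordan decomposition of $a \pm b$ explicitly. Write
\[
 a + b = (a^+ + b^+) - (a^- + b^-), \qquad a - b = (a^+ + b^-) - (a^- + b^+).
\]
Each right-hand side is a difference of two positive elements whose product is $0$ by (2) (after expanding and using the pairwise orthogonalities). By uniqueness of the decomposition $x = x^+ - x^-$ with $x^+ x^- = 0$, these must be the positive and negative parts of $a \pm b$. Summing them then gives $\vert a \pm b \vert = (a^+ + a^-) + (b^+ + b^-) = \vert a \vert + \vert b \vert$.

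For $(3) \Rightarrow (1)$, I would go the other way: from $\vert a + b\vert = \vert a\vert + \vert b\vert$ and the identities $(a+b)^\pm = \tfrac{1}{2}(\vert a+b\vert \pm (a+b))$, a direct computation gives $(a+b)^+ = a^+ + b^+$ and $(a+b)^- = a^- + b^-$. The intrinsic orthogonality $(a+b)^+(a+b)^- = 0$ then reads $(a^+ + b^+)(a^- + b^-) = 0$, and Lemma \ref{1} extracts each summand, in particular $a^+ b^- = 0$ and $a^- b^+ = 0$. Applying the same argument to $\vert a - b\vert = \vert a\vert + \vert b\vert$ yields $a^+ b^+ = 0$ and $a^- b^- = 0$. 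Expanding $\vert a\vert \vert b\vert = (a^+ + a^-)(b^+ + b^-)$ then gives $(1)$.

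The only genuine subtlety is in $(3) \Rightarrow (1)$, where one really needs both equalities $\vert a+b\vert = \vert a\vert + \vert b\vert$ and $\vert a - b\vert = \vert a\vert + \vert b\vert$: either one alone produces only two of the four cross-orthogonalities, and one needs all four to recover $\vert a\vert \vert b\vert = 0$. Apart from that, every step is either the functional calculus, an arithmetic manipulation, or an invocation of Lemma \ref{1}.
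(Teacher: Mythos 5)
Your proposal is correct and follows essentially the same route as the paper: Lemma \ref{1} for $(1)\Rightarrow(2)$, the explicit decompositions $a\pm b=(a^{+}+b^{\pm})-(a^{-}+b^{\mp})$ with uniqueness of the orthogonal decomposition for $(2)\Rightarrow(3)$, and the reverse reading of those same decompositions (plus Lemma \ref{1}) for the remaining implication. The only difference is cosmetic — you close the cycle at $(1)$ rather than at $(2)$ as the paper does — and your remark that both sign choices in $(3)$ are genuinely needed is a point the paper leaves implicit.
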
 
This result is proved in a more general set up using order theoretic techniques. Here we provide a C$^*$-algebraic proof. 
\begin{proof}
	(1) implies (2): Let $\vert a \vert \vert b \vert = 0$. As $0 \le a^+, a^- \le  \vert a \vert$ and $0 \le b^+, b^- \le \vert b \vert$, a repeated use of Lemma \ref{1} yields that $a^+, a^-, b^+, b^-$ are mutually algebraically orthogonal.
	
	(2) implies (1): Let $a^+, a^-, b^+, b^-$ be mutually algebraically orthogonal. Then $(a^+ + a^-) (b^+ + b^-) = 0$. That is, $\vert a \vert \vert b \vert = 0$.
	
	(2) implies (3): Again, let $a^+, a^-, b^+, b^-$ be mutually algebraically orthogonal. Then 
	$$(a^+ + b^+) (a^- + b^-) = 0$$
	and 
	$$(a^+ + b^-) (a^- + b^+) = 0.$$
	
	Thus 
	\begin{eqnarray*}
		\vert a + b \vert &=& \vert a^+ - a^- + b^+ - b^- \vert \\ 
		&=& \vert (a^+ + b^+) - (a^- + b^-) \vert \\
		&=& a^+ + b^+ + a^- + b^- = \vert a \vert + \vert b \vert
	\end{eqnarray*}
	
	and 
	\begin{eqnarray*}
		\vert a - b \vert &=& \vert a^+ - a^- - b^+ + b^- \vert \\ 
		&=& \vert (a^+ + b^-) - (a^- + b^+) \vert \\
		&=& a^+ + b^- + a^- + b^+ = \vert a \vert + \vert b \vert.
	\end{eqnarray*}
	
	(3) implies (2): Finally, assume that $\vert a \pm b \vert = \vert a \vert + \vert b \vert$. Then as before, we may get that $(a^+ + b^+) \perp^a (a^- + b^-)$ and $(a^+ + b^-) \perp^a (a^- + b^+) $. Thus $a^+, a^-, b^+, b^-$ are mutually algebraically orthogonal.
\end{proof}
We `define' that $a, b \in A_{sa}$ are \emph{algebraically orthogonal}, if $\vert a \vert \vert b \vert = 0$. More generally, algebraic orthogonality of a general pair of elements in $A$ can be `described' in terms of an algebraically orthogonal pair of self-adjoint elements in the C$^*$-algebra $M_2(A)$. Let $a, b \in A$. Then $a$ is algebraically orthogonal to $b$, if $\begin{bmatrix} 0 & a \\ a^* & 0 \end{bmatrix}$ is algebraically orthogonal to $\begin{bmatrix} 0 & b \\ b^* & 0 \end{bmatrix}$ in $M_2(A)_{sa}$. Note that $\left\vert \begin{bmatrix} 0 & x \\ x^* & 0 \end{bmatrix} \right \vert = \begin{bmatrix} \vert x^* \vert & 0 \\ 0 & \vert x \vert \end{bmatrix}$ for any $x \in A$. Thus $a$ is algebraic orthogonal to $b$ if and only if $\vert a \vert \vert b \vert = 0$ and $\vert a^* \vert \vert b^* \vert = 0$. Now the following result relates this `definition' with the standard definition of algebraic orthogonality. 
\begin{proposition}\label{3}
	Let $A$ be a C$^*$-algebra and let $a, b \in A$. Then $a^{\ast} b = 0$ if and only if $\vert a^{\ast} \vert \vert b^{\ast} \vert = 0$. In other words, $a$ is algebraically orthogonal to $b$ if and only if $ a  b^* = 0 = a^* b$. In particular, for $a, b \in A_{sa}$, we have $a$ is algebraically orthogonal to $b$ if and only if $a b = 0$.
\end{proposition}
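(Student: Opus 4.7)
The plan is to reduce everything to a single norm identity of the form $\|a^\ast b\| = \bigl\||a^\ast|\,|b^\ast|\bigr\|$, obtained by iterating the C$^*$-identity $\|x\|^2 = \|x^\ast x\| = \|xx^\ast\|$. Once this identity is established, one side vanishes if and only if the other does, which immediately gives the first equivalence of the proposition.

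Concretely, I would compute
\[
\|a^\ast b\|^2 \;=\; \|(a^\ast b)(a^\ast b)^\ast\| \;=\; \|a^\ast b b^\ast a\| \;=\; \|a^\ast |b^\ast|^2 a\| \;=\; \bigl\||b^\ast|\,a\bigr\|^2,
\]
using that $a^\ast |b^\ast|^2 a = (|b^\ast|a)^\ast(|b^\ast|a)$. Applying the C$^*$-identity a second time,
\[
\bigl\||b^\ast|\,a\bigr\|^2 \;=\; \bigl\||b^\ast|\,a a^\ast\,|b^\ast|\bigr\| \;=\; \bigl\||b^\ast|\,|a^\ast|^2\,|b^\ast|\bigr\| \;=\; \bigl\||a^\ast|\,|b^\ast|\bigr\|^2.
\]
Chaining these, $\|a^\ast b\| = \bigl\||a^\ast|\,|b^\ast|\bigr\|$, and so $a^\ast b = 0$ if and only if $|a^\ast|\,|b^\ast| = 0$, which is the first claim.

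For the second assertion, I would apply the identity just proved with $a$ and $b$ replaced by $a^\ast$ and $b^\ast$, giving $ab^\ast = 0$ iff $|a|\,|b| = 0$. By the discussion preceding the proposition, algebraic orthogonality of $a$ and $b$ is defined by the simultaneous vanishing $|a^\ast|\,|b^\ast| = 0$ and $|a|\,|b| = 0$, so the two equivalences combine to give exactly $a^\ast b = 0 = ab^\ast$. The self-adjoint special case is immediate: when $a = a^\ast$ and $b = b^\ast$, both conditions $a^\ast b = 0$ and $ab^\ast = 0$ reduce to the single relation $ab = 0$.

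I do not anticipate a real obstacle here; the proof is essentially a bookkeeping exercise in the C$^*$-identity. The only point where care is required is keeping track of which of $|x|$ and $|x^\ast|$ appears at each step, since $a^\ast$ naturally interacts with $|a^\ast|^2 = aa^\ast$ rather than with $|a|^2 = a^\ast a$; this is exactly why the symmetric formulation $|a^\ast|\,|b^\ast|$ (rather than $|a|\,|b|$) is the one that pairs with $a^\ast b$.
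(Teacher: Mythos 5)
Your proposal is correct, and every step checks out: $\|a^{\ast}b\|^2 = \|a^{\ast}bb^{\ast}a\| = \|(|b^{\ast}|a)^{\ast}(|b^{\ast}|a)\| = \||b^{\ast}|a\|^2$ and $\||b^{\ast}|a\|^2 = \||b^{\ast}|aa^{\ast}|b^{\ast}|\| = \|(|a^{\ast}||b^{\ast}|)^{\ast}(|a^{\ast}||b^{\ast}|)\| = \||a^{\ast}||b^{\ast}|\|^2$, so the norm identity $\|a^{\ast}b\| = \||a^{\ast}||b^{\ast}|\|$ holds and the equivalence follows. This is a genuinely different route from the paper's. The paper's proof (attributed to Peralta) handles the forward direction by noting $a^{\ast}b=0$ gives $aa^{\ast}bb^{\ast}=0$, but for the converse it passes to the bidual: from $|a^{\ast}||b^{\ast}|=0$ it deduces $|a^{\ast}|^{1/m}|b^{\ast}|^{1/n}=0$, takes SOT limits in $A^{\ast\ast}$ to obtain orthogonality of the range projections $r(|a^{\ast}|)\,r(|b^{\ast}|)=0$, and then writes $a^{\ast}b = a^{\ast}r(|a^{\ast}|)\,r(|b^{\ast}|)\,b = 0$. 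Your argument stays entirely inside $A$, needs nothing beyond the C$^{\ast}$-identity, treats both directions symmetrically in one stroke, and in fact proves the strictly stronger quantitative statement that the two norms are equal rather than merely that one vanishes exactly when the other does. The reduction of the remaining assertions (replacing $a,b$ by $a^{\ast},b^{\ast}$ to get $ab^{\ast}=0 \Leftrightarrow |a||b|=0$, and the self-adjoint specialization) matches the paper's "the other statements follow easily" and is handled correctly.
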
 
The following proof was suggested to the author by Antonio M. Peralta.
\begin{proof}
	Without any loss of generality, we may assume that $\Vert a \Vert = 1 = \Vert b \Vert$. First, we assume that $a^{\ast} b = 0$. Then $a a^{\ast} b b^{\ast} = 0$ so that $\vert a^{\ast} \vert \vert b^{\ast} \vert = 0$.
	
	Conversely, assume that $\vert a^{\ast} \vert \vert b^{\ast} \vert = 0$. Then for any $m, n \in \mathbb{N}$ we have $\vert a^{\ast} \vert^{\frac{1}{m}} \vert b^{\ast} \vert^{\frac{1}{n}} = 0$. Since $x^{\frac{1}{m}} \to r(x)$ in the SOT in $A^{\ast\ast}$ for any $x \in A^{\ast\ast +}$, we may conclude that $r(\vert a^{\ast}\vert) r(\vert b^{\ast}\vert) = 0$. Thus 
	$$a^{\ast} b = a^{\ast} r(\vert a^{\ast}\vert) r(\vert b^{\ast}\vert) b = 0.$$
	Now, the other statements follow easily from here.
\end{proof}
This confirms with the traditional definition: Let $a, b \in A$. We say that $a$ is \emph{algebraically orthogonal} to $b$, if $a b^* = 0 = a^* b$. In this case, we write $a \perp^a b$.

Now, we present a result which generalizes the notions of infimum and supremum.
\begin{theorem}\label{4}
	Let $A$ be a C$^*$-algebra and let $a, b \in A_{sa}$.
	\begin{enumerate}
		\item There exists a unique $c \in A_{sa}$ such that 
		\begin{enumerate}
			\item $c \le a, c \le b$; and 
			\item $(a - c) \perp^a (b - c)$.
		\end{enumerate} 
		\item There exists a unique $d \in A_{sa}$ such that 
		\begin{enumerate}
			\item $a \le d, b \le d$; and 
			\item $(d - a) \perp^a (d - b)$.
		\end{enumerate} 
	\end{enumerate}
\end{theorem}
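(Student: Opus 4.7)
The plan is to prove (1) by an explicit construction using the Jordan decomposition $a - b = (a-b)^+ - (a-b)^-$, and then to derive uniqueness from the uniqueness of that decomposition. Specifically, I would set $c := a - (a-b)^+$. A direct calculation gives $a - c = (a-b)^+ \ge 0$ and $b - c = -(a-b) + (a-b)^+ = (a-b)^- \ge 0$, so $c \le a$ and $c \le b$, which is condition (a). Their product $(a-c)(b-c) = (a-b)^+ (a-b)^- = 0$, and since both factors are self-adjoint, Proposition \ref{3} promotes this to $(a-c) \perp^a (b-c)$, giving (b).

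For uniqueness in (1), suppose $c' \in A_{sa}$ also satisfies the two conditions, and set $p := a - c'$ and $q := b - c'$. Then $p, q \in A^+$ with $p - q = a - b$, and the hypothesis $(a - c') \perp^a (b - c')$ becomes $pq = 0$ by Proposition \ref{3}. The decomposition of a self-adjoint element as a difference of two orthogonal positive elements is unique---a standard consequence of the continuous functional calculus---so $p = (a-b)^+$, forcing $c' = a - p = c$. Part (2) is entirely parallel: take $d := a + (a-b)^-$, so that $d - a = (a-b)^-$ and $d - b = (a-b)^+$, and both conditions follow at once from the same Jordan decomposition; alternatively, (2) reduces to (1) applied to $(-a,-b)$ via negation, with uniqueness transferring automatically.

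The main (and essentially only) subtlety is the uniqueness of the decomposition $x = p - q$ with $p, q \in A^+$ and $pq = 0$. This is standard but deserves a brief justification: $pq = 0$ forces $p$ and $q$ to commute (since $qp = (pq)^* = 0$), so $p, q, x$ all lie in a commutative C$^*$-subalgebra on which the Gelfand--Naimark theorem and elementary functional calculus identify $p$ with $x^+$ and $q$ with $x^-$. Everything else is routine algebraic manipulation resting on Propositions \ref{2} and \ref{3}.
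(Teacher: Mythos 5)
Your proposal is correct and follows essentially the same route as the paper: the candidate $c = a - (a-b)^+$ is literally the paper's $\frac{1}{2}(a+b-|a-b|)$, and uniqueness is reduced in both cases to the uniqueness of the Jordan decomposition $x = x^+ - x^-$ with $x^+x^- = 0$ given by the functional calculus. Your extra paragraph justifying that uniqueness (via commutativity and Gelfand--Naimark) is a welcome elaboration of a step the paper simply cites, but it does not change the argument.
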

\begin{proof}
	(1). Put $a - b = x$. Then $x \in A_{sa}$. By the functional calculus, there exist unique $x^+, x^- \in A^+$ with $x^+ x^- = 0$ such that $x = x^+ - x^-$ and $\vert x \vert := (x^2)^{\frac{1}{2}} = x^+ + x^-$. Set $c = \frac{1}{2} (a + b - \vert a - b \vert)$. Then $c \in A_{sa}$, 
	$$a - c = \frac{1}{2} (a - b + \vert a - b \vert) = x^+ \in A^+,$$
	and 
	$$b - c = \frac{1}{2} (b - a + \vert a - b \vert) = x^- \in A^+$$ 
	so that $(a - c)(b - c) = x^+ x^- = 0$. Thus $(a - c) \perp^a (b - c)$. 
	
	Next, let $c_1 \in A_{sa}$ such that $c_1 \le a, c_1 \le b$; and $(a - c_1)(b - c_1) = 0$. Put $a - c_1 = a_1$ and $b - c_1 = b_1$. Then $a_1, b_1 \in A^+$ with $a_1 b_1 = 0$. Also $a_1 - b_1 = a - b = x$. Thus by the functional calculus, we get $a_1 = x^+$ and $b_1 = x^-$. Now, it follows that  
	$$c = a - x^+ = a - a_1 = c_1.$$
	Similarly, (2) can be proved by dual arguments.
\end{proof} 
\begin{definition}
	Let $A$ be a C$^*$-algebra and let $a, b \in A_{sa}$. We define $a \dot\wedge b := \frac{1}{2} (a + b - \vert a - b \vert)$ as the \emph{ortho-infimum} of $a$ and $b$. Similarly, we define $a \dot\vee b := \frac{1}{2} (a + b + \vert a - b \vert)$ as the \emph{ortho-supremum} of $a$ and $b$.
\end{definition} 
We note that these notions coincide with the usual notions of infimum and supremum respectively, in the case of a vector lattice. Let $(L, L^+, \wedge, \vee)$ be a vector lattice. Recall that a pair of elements $x, y \in L$ is said to be orthogonal, (we write, $x \perp^{\ell} y$), if $\vert x \vert \wedge \vert y \vert = 0$. Here $\vert u \vert := u \vee (-u)$ for all $u \in L$. 
\begin{corollary}\label{5}
	Let $V$ be a vector lattice and let $x, y \in V$.
	\begin{enumerate}
		\item There exists a unique element $u = x \wedge y \in V$ such that 
		\begin{enumerate}
			\item $u \le x$, $u \le y$; and 
			\item $(x - u) \perp^{\ell} (y - u)$.
		\end{enumerate} 
		\item There exists a unique element $v = x \vee y \in A_{sa}$ such that 
		\begin{enumerate}
			\item $x \le v$, $y \le v$; and 
			\item $(v - x) \perp^{\ell} (v - y)$.
		\end{enumerate} 
	\end{enumerate}
\end{corollary}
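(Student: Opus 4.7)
The plan is to mirror the proof of Theorem \ref{4}, replacing the C*-algebraic functional calculus by the analogous decomposition available in any vector lattice. Specifically, for $z \in V$ the positive and negative parts $z^+ := z \vee 0$ and $z^- := (-z) \vee 0$ satisfy $z = z^+ - z^-$, $|z| = z^+ + z^-$, and $z^+ \wedge z^- = 0$; moreover this is the \emph{unique} decomposition of $z$ as a difference of two elements of $V^+$ whose lattice meet is $0$. This uniqueness is precisely the role that the C*-functional calculus played in the proof of Theorem \ref{4}.

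For part (1), I take $u := \frac{1}{2}(x + y - |x - y|)$, which is the standard formula for $x \wedge y$. A direct lattice computation then yields $x - u = \frac{1}{2}((x - y) + |x - y|) = (x-y)^+$ and, symmetrically, $y - u = (x-y)^-$. Both lie in $V^+$, establishing $u \le x$ and $u \le y$; and since $(x-y)^+ \wedge (x-y)^- = 0$, we have $|x-u| \wedge |y-u| = (x-u) \wedge (y-u) = 0$, i.e.\ $(x - u) \perp^{\ell} (y - u)$.

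For uniqueness, suppose $u_1 \in V$ also satisfies (a) and (b). Writing $a_1 := x - u_1$ and $b_1 := y - u_1$, condition (a) gives $a_1, b_1 \in V^+$, condition (b) gives $a_1 \wedge b_1 = 0$, and on the other hand $a_1 - b_1 = x - y$. The uniqueness of the positive-negative decomposition recalled above forces $a_1 = (x-y)^+$, whence $u_1 = x - (x-y)^+ = u$. Part (2) is handled symmetrically by applying (1) to $-x, -y$, or by verifying directly that $v := \frac{1}{2}(x + y + |x - y|) = x \vee y$ satisfies $v - x = (y - x)^+$ and $v - y = (x - y)^+$, which are positive with meet zero.

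There is no serious obstacle: the argument is essentially the proof of Theorem \ref{4} with $\perp^a$ replaced by $\perp^{\ell}$ and the C*-functional calculus replaced by the lattice operations on $x - y$. The only conceptual point is to isolate the two structural features actually used, namely the existence and the uniqueness of the orthogonal decomposition of $x - y$, both of which are built into the axioms of a vector lattice.
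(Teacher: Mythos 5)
Your proposal is correct and follows exactly the route the paper intends: the paper's proof simply observes that $\frac{1}{2}(x+y-\vert x-y\vert)=x\wedge y$ and $\frac{1}{2}(x+y+\vert x-y\vert)=x\vee y$ and then says the proof of Theorem \ref{4} can be replicated, which is precisely what you do, with the uniqueness of the orthogonal decomposition $z=z^{+}-z^{-}$, $z^{+}\wedge z^{-}=0$ in a vector lattice playing the role of the functional calculus. Your write-up is in fact more explicit than the paper's about which two structural facts are being used, but the argument is the same.
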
 
\begin{proof}
	Note that
	$$\frac{1}{2} (x + y - \vert x - y \vert) = x \wedge y$$ 
	and 
	$$\frac{1}{2} (x + y + \vert x - y \vert) = x \vee y.$$ 
	Thus proof of Theorem \ref{4} can be replicated.
\end{proof} 
Thus, in a vector lattice, the notions ortho-infimum and ortho-supremum coincide with the notions of infimum and supremum, respectively. Narrating differently, the notions of ortho-infimum and ortho-supremum extend the notions of infimum and supremum respectively (confirmed in vector lattices) to the self-adjoint part of a general C$^*$-algebra.

\section{orthogonality and norm} 
Recall that a vector lattice $V$ with a norm $\Vert\cdot\Vert$ is called an $AM$-space, if 
\begin{enumerate}
	\item $\Vert  u \Vert \le \Vert v \Vert$ whenever $u, v \in V$ with $\vert u \vert \le \vert v\vert$; and 
	\item  $\Vert u \vee v \Vert = \max \lbrace \vert u \Vert, \Vert v \Vert \rbrace$ for all $u, v \in V^+$.
\end{enumerate} 
 A positive element $e \in V^+$ is called an \emph{order unit} for $V$, if for each $v \in V$, there exists $k > 0$ such that $k e \pm v \in V^+$. If, in addition, 
$$\Vert v \Vert = \inf \lbrace k > 0: k e \pm v \in V^+ \rbrace$$ 
for all $v \in V$, then $V$ is called a unital $AM$-space. 
\begin{definition}
	Let $V$ be a real normed linear space. For $u, v \in V$ we say that $u$ is \emph{$\infty$-orthogonal} to $v$, (we write, $u \perp _{\infty} v$), if $ \left\Vert u + kv \right\Vert = \max \{ \Vert u \Vert, \Vert kv \Vert \},$ for all $k \in \mathbb{R}$. 
\end{definition}
\begin{definition}
	Let $A$ be a C$^*$-algebra. For $a, b \in A^+$, we say that $a$ is \emph{absolutely $\infty$-orthogonal} to $b$, (we write $a \perp_{\infty}^a b$), if $c \perp_{\infty} d$ whenever $0 \le c \le a$ and $0 \le d \le b$. 
\end{definition}
If $V$ is an $AM$-space, then $V$ is isometrically order isomorphic to $A_{sa}$ for some commutative C$^*$-algebra $A$. Thus the notion of absolute $\infty$-orthogonality makes sense in $V^+$ as well. In fact, absolute $\infty$-orthogonality can be defined in a more general set up \cite[Definition 4.5]{K16}. It was proved in \cite[Theorem 2.1]{K18}, (see also \cite[Theorem 4.3 and Conjecture 4.4]{K16}), that in a C$^*$-algebra $A$, we have $\perp^a = \perp_{\infty}^a $ on $A^+$. For an $AM$-space, can have a stronger result.

\begin{proposition}\label{6} 
	In an $AM$-space $V$, we have $\perp^{\ell} = \perp_{\infty}^a$ on $V^+$. 
\end{proposition}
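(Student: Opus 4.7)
The plan is to verify the two inclusions $\perp^{\ell} \subseteq \perp_{\infty}^a$ and $\perp_{\infty}^a \subseteq \perp^{\ell}$ separately, using the elementary vector-lattice identity that for disjoint positives $a \wedge b = 0$ one has $|a - b| = a + b = a \vee b$.

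For the forward implication, suppose $u, v \in V^+$ with $u \wedge v = 0$. Given $0 \le c \le u$ and $0 \le d \le v$, the standard vector-lattice estimate $0 \le c \wedge d \le u \wedge v$ forces $c \wedge d = 0$, and consequently $c \wedge |k|d = 0$ for every scalar $k \in \R$ (this last step uses only that $t(c \wedge d) \ge c \wedge (td)$ up to sign, or alternatively a direct squeeze $0 \le c \wedge |k|d \le $ a multiple of $c \wedge d$). Computing $(c - |k|d)^+ = c - c \wedge |k|d = c$ and $(c - |k|d)^- = |k|d$ I obtain $|c + kd| = c + |k|d = c \vee |k|d$, regardless of the sign of $k$. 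The two $AM$-space axioms then yield
$$\|c + kd\| = \||c + kd|\| = \|c \vee |k|d\| = \max\{\|c\|,\|kd\|\},$$
which is exactly $c \perp_{\infty} d$. Since $c, d$ were arbitrary, this gives $u \perp_{\infty}^a v$.

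For the converse, suppose $u \perp_{\infty}^a v$ and set $c := u \wedge v \in V^+$. Then $0 \le c \le u$ and $0 \le c \le v$, so the hypothesis applied to $c$ and $c$ yields $c \perp_{\infty} c$; evaluating at $k = -1$ produces
$$0 = \|c - c\| = \max\{\|c\|,\|c\|\} = \|c\|,$$
and hence $u \wedge v = 0$, i.e., $u \perp^{\ell} v$.

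The only mildly delicate point is the absolute-value calculation $|c + kd| = c + |k|d$ for disjoint positives together with the stability of disjointness under positive scalings; once that is established from the formulas for the positive and negative parts in a vector lattice, the two $AM$-space axioms do the remaining work, and the converse is almost immediate from taking $k = -1$ against $c = u \wedge v$.
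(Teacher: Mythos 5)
Your proof is correct and follows essentially the same route as the paper: the forward direction rests on the identity $\vert c + kd\vert = c + \vert k\vert d = c \vee \vert k\vert d$ for disjoint positives together with the two $AM$-space axioms, and the converse applies absolute $\infty$-orthogonality to $w = u \wedge v$ against itself (the paper leaves the evaluation at $k=-1$ implicit, which you spell out). No gaps.
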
 
Though this result can be deduced from \cite[Theorem 4.2]{K16} using Kakutani's theorem for $AM$-spaces, we give a direct order theoretic proof.
\begin{proof}
	Let $u, v \in V^+$. First, we assume that $u \perp{\ell} v$, that is, $u \wedge v = 0$. Then for $0 \le u_1 \le u$, $0 \le v_1 \le v$ and any $k \in \mathbb{R}$, we have 
	$$\vert u_1 \vert \wedge \vert k v_1 \vert = u_1 \wedge (\vert k \vert v_1) = 0$$ 
	so that 
	$$\vert u_1 + k v_1 \vert = u_1 + \vert k \vert v_1 = u_1 \vee (\vert k \vert v_1).$$
	Since $V$ is an $AM$-space, we get that 
	$$\Vert u_1 + k v_1 \Vert = \Vert \vert u_1 + k v_1 \vert \Vert = \Vert u_1 \vee (\vert k \vert v_1) \Vert = \max \lbrace \Vert u_1 \Vert, \Vert k v_1 \Vert \rbrace.$$
	Thus $u \perp_{\infty}^a v$. 
	
	Conversely, assume that $u \perp_{\infty}^a v$. Put $w = u \wedge v$. Then $0 \le w \le u$ and $w \le v$. Thus, by assumption, $w \perp_{\infty} w$. But then $w = 0$. Hence $u \perp^{\ell} v$.
\end{proof}
If we write $\perp$ as a generalization of $\perp^a$ in a C$^*$-algebra and $\perp^{\ell}$ in a vector lattice, we may deduce the following definition. (See \cite{K18} with special attention to Remark 3.3.) 
\begin{definition}
	Let $(V, V^+)$ be a real ordered vector space. Assume that $\perp$ is a binary relation in $V$ such that for $u, v, w \in V$, we have
	\begin{enumerate}
		\item $u \perp 0$;
		\item $u \perp v$ implies $v \perp u$; 
		\item $u \perp v$ and $u \perp w$ imply $u \perp (k v + w)$ for all $k \in \mathbb{R}$; 
		\item For each $u \in L$, there exist unique $u^+, u^- \in L^+$ with $u^+ \perp u^-$ such that $u = u^+ - u^-$. \\
		Let us put $u^+ + u^- := \vert u \vert$.
		\item If $u \perp v$ and if $\vert w \vert \le \vert v \vert$, then $u \perp w$. 		
	\end{enumerate} 
	Then $V$ is called an \emph{absolutely ordered vector space}.
\end{definition}
A normed version of this notion is included in the following result.
\begin{theorem}\label{7}
	Let $(V, e)$ be an order unit space. Then the following two sets of conditions are equivalent:
	\begin{enumerate}
		\item 
		\begin{enumerate}
			\item For each $u \in V$, the exists a unique pair $u^+, u^- \in V^+$ with $u^+ \perp_{\infty}^a u^-$ such that $u = u^+ - u^-$; \\			
			Set $|u| := u^+ + u^-$. 
			\item If $u, v, w \in V^+$ with $u \perp_{\infty}^a v$ and $u \perp_{\infty}^a w$, then we have $u \perp_{\infty}^a |v \pm w|$.
		\end{enumerate}
		\item $V$ is an absolutely ordered vector space in which $\perp = \perp_{\infty}^a$ on $V^+$. 
	\end{enumerate}
\end{theorem}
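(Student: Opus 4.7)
The plan is to verify each implication of the biconditional separately.

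For $(2) \Rightarrow (1)$, assuming $V$ is absolutely ordered and $\perp$ agrees with $\perp_{\infty}^a$ on $V^+$, statement (1)(a) is axiom (4) of the absolutely ordered structure read off on the positive cone. For (1)(b), given $u, v, w \in V^+$ with $u \perp_{\infty}^a v$ and $u \perp_{\infty}^a w$, axiom (3) yields $u \perp (v \pm w)$; axiom (5) combined with $(v \pm w)^{\pm} \le |v \pm w|$ then gives $u \perp (v \pm w)^{\pm}$, and a final application of axiom (3) to the sum $(v \pm w)^+ + (v \pm w)^- = |v \pm w|$ produces $u \perp_{\infty}^a |v \pm w|$.

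For $(1) \Rightarrow (2)$, the first task is to extend $\perp_{\infty}^a$, currently defined only between positive elements, to a binary relation $\perp$ on all of $V$. Using (1)(a), I would declare $u \perp v$ exactly when each of the four positive pairs $(u^+, v^+), (u^+, v^-), (u^-, v^+), (u^-, v^-)$ is absolutely $\infty$-orthogonal. I then verify the five axioms of an absolutely ordered vector space: axioms (1) ($0$-orthogonality) and (2) (symmetry) are immediate from the symmetric, downward-hereditary form of the definition of $\perp_{\infty}^a$; axiom (4) is a rephrasing of (1)(a), with $|u| := u^+ + u^-$; and axiom (5) follows from the downward heredity of $\perp_{\infty}^a$ together with the observation that $|w| \le |v|$ forces $w^{\pm} \le |v| = v^+ + v^-$.

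The central step is axiom (3): closure of $\perp$ under real-linear combinations. I would first handle the positive case — $u, v, w \in V^+$ with $u \perp_{\infty}^a v$ and $u \perp_{\infty}^a w$. For $k \ge 0$, the element $kv + w$ is already positive; hypothesis (1)(b) gives $u \perp_{\infty}^a |kv + w| = kv + w$ after a routine check of positive homogeneity (which is direct from the scaling built into $\perp_{\infty}$). For $k < 0$ the combination $kv + w = w - |k|v$ is signed; applying (1)(b) to the pair $|k| v, w$ yields $u \perp_{\infty}^a |w - |k| v|$, from which downward heredity extracts $u \perp_{\infty}^a (w - |k| v)^{\pm}$, which is precisely $u \perp (kv + w)$ in the extended sense. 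The signed case of axiom (3) then reduces to the positive case by decomposing each of $u, v, w$ via (1)(a) and applying the positive case to the resulting quadruples of positive pieces.

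I expect the principal obstacle to be this last reduction, namely the identification of $(kv + w)^{\pm}$ with the expressions assembled from positive pieces of $v$ and $w$. This is where the uniqueness clause of (1)(a) is essential: one must argue that the constructive decomposition one writes down in fact coincides with the canonical one used to define $\perp$ on $V$, and it is here that the combined force of (1)(a) and (1)(b) is indispensable.
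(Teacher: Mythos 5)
Your proposal is correct and follows essentially the same route as the paper: extend the relation to all of $V$ via the unique decomposition from (1)(a), obtain symmetry, homogeneity and heredity for free from the definition of $\perp_{\infty}^a$, and get additivity by repeated application of (1)(b) to the positive parts (the paper writes $v+w = (v^+ + w^+) - (v^- + w^-)$ exactly as you do). Your definition of $u \perp v$ via the four pairs $(u^{\pm}, v^{\pm})$ is equivalent, given (1)(b) and downward heredity, to the paper's $|u| \perp_{\infty}^a |v|$, and the final ``obstacle'' you flag dissolves: one never needs to identify $(kv+w)^{\pm}$ with expressions built from pieces of $v$ and $w$, only to note that $0 \le (kv+w)^{\pm} \le |kv+w|$ and invoke heredity after (1)(b) yields $u \perp_{\infty}^a |kv+w|$.
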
 
In this case, $(V, |\cdot|, e)$ is called an \emph{absolute order unit space}. again, we may recall that a more general normed version of absolutely ordered vector spaces was introduced in \cite[Definition 3.8]{K18}.
\begin{proof}
	Clearly, (2) implies set (1). Let us now assume that (1)(a) and (1)(b) hold. For $u, v \in V$, we define $u \perp v$, if $\vert u \vert \perp_{\infty}^a \vert v \vert$. Then $\perp = \perp_{\infty}^a$ on $V^+$. Also then the following fact follow immediately from the definition of $\perp_{\infty}^a$:
	\begin{enumerate}%[$(a)$]
		\item[(i)] For each $u \in V$, there exist unique $u^+, u^- \in V^+$ with $u^+ \perp u^-$ such that $u = u^+ - u^-$;
		\item[(ii)] $u \perp 0$ for all $u \in V$;
		\item[(iii)] $u \perp v$ implies $v \perp u$; 
		\item[(iv)] $u \perp v$ implies $u \perp k v$ for any $k \in \mathbb{R}$;  and 
		\item[(v)] if $u \perp v$ and $\vert w \vert \le \vert v \vert$, then $u \perp w$.
	\end{enumerate} 
	Thus it only remains to prove that $u \perp (v + w)$ whenever $u \perp v$ and $u \perp w$. Assume that $u \perp v$ and $u \perp w$. Then $\vert u \vert \perp_{\infty}^a \vert v \vert$ and $\vert u \vert \perp_{\infty}^a \vert w \vert$. Then by the definition of $\perp_{\infty}^a$, we may deduce that $\vert u \vert \perp_{\infty}^a \lbrace v^+, v^-, w^+, w^- \rbrace$. Now, by a repeated use of (1)(b), we obtain that $\vert u \vert \perp_{\infty}^a \vert v + w \vert$ for 
	$$v + w = v^+ - v^- + w^+ - w^- = (v^+ + w^+) - (v^- + w^-).$$
	Thus $u \perp (v + w)$. Hence $V$ is an absolutely ordered vector space. 
\end{proof}
In some recent works, we have been able to extend some of the properties of operator algebras to absolute (matrix) order unit spaces \cite{K18, KK19, KK19a}. We hope to present absolute (matrix) order unit spaces as a non-commutative analogue of unital $AM$-spaces. 

\thanks{{\bf Acknowledgements}: The author is thankful to Antonio M. Peralta for his inputs on orthogonality in C$^*$-algebras.}

\end{document}